\DeclareFontFamily{OT1}{rsfs}{}
\DeclareFontShape{OT1}{rsfs}{n}{it}{<-> rsfs10}{}
\DeclareMathAlphabet{\curly}{OT1}{rsfs}{n}{it}
\newtheorem{thm}{Theorem}
\newtheorem{lemma}[thm]{Lemma}
\newtheorem{prop}[thm]{Proposition}
\newcommand\AAA{\mu}
\newcommand\CCC{\lambda}
\newcommand\C{\mathbb C}
\newcommand\into{\hookrightarrow}
\newcommand\Into{\ar@{^{ (}->}[r]}
\newfont{\bigtimesfont}{cmsy10 scaled \magstep5}
\newcommand{\bigtimes}{\mathop{\lower0.9ex\hbox{\bigtimesfont\symbol2}}}
\newcommand\Spec{\operatorname{Spec}}
\newcommand\Sym{\operatorname{Sym}}
\theoremstyle{remark}
\begin{document}

\title{Almost closed 1-forms}
\author[Pandharipande, Thomas]{R. Pandharipande and R. P. Thomas}
\maketitle

 \begin{abstract}
We construct an algebraic almost closed 1-form with zero scheme
not expressible (even locally) 
as the critical locus of a holomorphic function
on a nonsingular variety. The result answers a question of
Behrend-Fantechi. We correct here an error in our paper
\cite{MPT} where an incorrect construction with the same claimed
properties was proposed.
 \end{abstract}

 \setcounter{section}{-1}
 \thispagestyle{empty}

 \baselineskip=15pt


\section{Introduction} \label{kkxx33}

An algebraic 1-form $\omega$ on a nonsingular variety $V$
is {\em almost closed} if
$d\omega$ vanishes on the zero scheme 
$\mathcal{Z}(\omega) \subset V$ of $\omega$.
For example, for every nonconstant
polynomial $F\in \C[x,y]$, the 1-form
\begin{equation}\label{gbbt}
\omega = dF + F dz
\end{equation}
is almost closed, but not closed,
on $\C^3$ with coordinates $x,y,z$.
By a construction of Behrend-Fantechi \cite{BF2}, the zero scheme
$\mathcal{Z}(\omega)$ carries a natural symmetric
obstruction theory in case $\omega$ is almost closed.

A question asked by Behrend and Fantechi \cite{bdt,BF2} is the following.
If $\omega$ is almost closed, can we find an analytic
neighbourhood 
$$\mathcal{Z} \subset U \subset V$$ and a holomorphic
function 
$\Phi$ on $U$ so the equality
$$\mathcal{Z}(\omega) = \mathcal{Z}(d\Phi)$$
holds as subschemes? In other words, is $\mathcal{Z}(\omega)$
always the critical locus of a holomorphic function? 
In example \eqref{gbbt}, the function
\begin{equation} \label{Joyce}
\Phi= e^zF
\end{equation}
 provides an affirmative answer. Such a $\Phi$ is
called a {\em potential} for $\mathcal{Z}$.

We will construct a nonreduced point at the origin
$$\mathcal{Z} \subset \C^2$$ which is the zero 
scheme of an algebraic almost closed 1-form on a
Zariski open neighbourhood of the orgin in $\C^2$,
but \emph{not} 
the critical locus of any holomorphic function defined
in a neighbourhood of the origin in $\C^2$. 
 We also show $\mathcal{Z}$ cannot be a critical locus 
of a holomorphic function defined locally on $\C^n$
for any $n$.

We will primarily study almost closed 1-forms via formal power series 
analysis at the origin. In Section \ref{Salg} we show how these methods also yield \emph{algebraic} almost closed 1-forms on Zariski open neighbourhoods
of the origin, as relevant to the Behrend-Fantechi question. \medskip

Recently Pantev, To\"en, Vaquie and Vezzosi have introduced the notion of a \emph{$(-1)$-shifted symplectic structure} on a derived scheme. This is a stronger condition than the existence of a symmetric obstruction theory on the underlying scheme $\mathcal Z$. Brav, Bussi, Dupont and Joyce \cite{Joyce} have announced that a scheme $\mathcal Z$ is locally a critical locus if and only if it underlies a derived scheme with $(-1)$-shifted symplectic structure. \medskip

\noindent\textbf{Acknowledgements.} 
We would like to thank Dominic Joyce for showing us the potential \eqref{Joyce} for the zeros of the 1-form \eqref{gbbt} and pointing out the error in \cite[Appendix A.5]{MPT}. Specifically, the averaging argument in
Corollary 34 is incorrect. The error
there does not affect the other counterexamples in \cite[Appendix A]{MPT}.

\section{Construction of the almost closed 1-form} \label{S1}
Consider $\C^2$ with coordinates $x$ and $y$.
We will use capital letters 
 to denote elements
of the ring $\C[\![x,y]\!]$.
In case the element is polynomial and homogenous, 
the degree is designated in the subscript.
As usual partial derivatives will also be denoted by subscripts, so
$$A_{d,xy} = \frac{\partial^2 A_d}{\partial x \partial y} $$
is a homogeneous polynomial of degree $d-2$.

We start by constructing an almost closed 1-form $\sigma$ on $\C^2$ via formal power series.
The construction depends upon the initial choice of a sufficiently 
large degree $d$. 
Let
\begin{eqnarray} 
\sigma &=& A dx+B dy \nonumber \\ \nonumber
&=& (A_d+A_{d+1}+A_{d+2}+\ldots)dx+(B_d+B_{d+1}+B_{d+2}+\ldots)dy,
\end{eqnarray}
be a 1-form starting at order $d$ satisfying
\begin{eqnarray}
A_y-B_x &=& CA+DB\nonumber \\
&=&  (C_0+C_1+C_2+\ldots)(A_d+A_{d+1}+A_{d+2}+\ldots) \label{ac} \\
&& \hspace{-9pt} +(D_0+D_1+D_2+\ldots)(B_d+B_{d+1}+B_{d+2}+\ldots), \nonumber
\end{eqnarray}
where $C,D\in \C[\![x,y]\!]$.

The zero scheme $\mathcal{Z}(\sigma)$ is  $\Spec\frac{\C[\![x,y]\!]}{(A,B)}$.
We shall often use the trivialisation 
$dy\wedge dx$ of the 2-forms on $\C^2$ to identify $d\sigma$ with the 
function $A_y-B_x$. 
Equation \eqref{ac} is the almost closed condition, 
$$d\sigma=A_y-B_x\in(A,B).$$
For sufficient large $d$,
we will prove the general $\sigma$ satisfying \eqref{ac}
has zero scheme which
is not the critical locus of a holomorphic function.

Fixing $C,D\in \C[\![x,y]\!]$,
we construct solutions $\sigma$ to the almost
closed equation \eqref{ac} by the following procedure:
\begin{itemize}
\item First, $A_d$ and $B_d$ are degree $d$ homogeneous polynomials satisfying
\begin{equation} \label{Pintegrable}
A_{d,y}-B_{d,x}=0.
\end{equation}
Thus
\begin{equation}\label{gvv3}
A_d=P_{d+1,x}\,, \quad B_d=P_{d+1,y}\,,
\end{equation} 
for any homogeneous degree $d+1$ polynomial $P_{d+1}$.
We pick a \emph{general} solution: a general element of the vector space
$$
\qquad\xymatrix@C=40pt{\ker\Big(\Sym^d(\C^2)^*\oplus\Sym^d(\C^2)^* \ar[r]^(.6){\partial_y\oplus-
\partial_x} & \Sym^{d-1}(\C^2)^*\Big).}
$$

\item Next, $A_{d+1}$ and $B_{d+1}$ are homogeneous polynomials
of degree $d+1$ satisfying
\begin{equation} \label{Aintegrable}
\quad A_{d+1,y}-B_{d+1,x}=C_0A_d+D_0B_d
\end{equation}
for the given constants $C_0,D_0$. The above is an affine linear 
equation on the vector space of pairs $(A_{d+1},B_{d+1})$ with 
a nonempty set of solutions (for example, 
set $B_{d+1}=0$ and integrate to get $A_{d+1}$).
 We pick a {\em general} element of the affine space of solutions.
\vspace{0pt}

\item At the $k^{th}$ order, we have defined $A_d,\ldots,A_{k-1}$ and 
$B_d,\ldots,B_{k-1}$, and we pick degree $k$ homogeneous polynomials
$A_k,B_k$ satisfying the affine linear equation 
\begin{equation}
\label{mss}
A_{k,y}-B_{k,x}\,=\, \sum_{i=0}^{k-1-d} C_i A_{k-1-i}+ \sum_{i=0}^{k-1-d}
D_i B_{k-1-i} \,.
\end{equation} 
Again the affine space of solutions is nonempty, and we pick a {\em general}
 element.
\end{itemize}
\vspace{8pt}

Thus, we obtain a general formal power series almost-closed 1-form
$\sigma$ starting at order $d$ with given $C,D\in \C[\![x,y]\!]$. 
Later, we will show $\sigma$ can be modified to be algebraic. \medskip

\section{Convergence}
Though not necessary for our construction,
 the power series solution $\sigma$
can be taken to be convergent when conditions\footnote{Stronger results are certainly possible, but the conditions we impose are sufficiently mild that the nonexistence result of Theorem \ref{Zurich} still holds if we replace the word ``general" by ``general within the Euclidean open set of Lemma \ref{CD0}".} are placed on the series
$C,D\in \C[\![x,y]\!]$.

\begin{lemma} \label{CD0}
Suppose  $C_{\ge2}=0=D_{\ge2}$. Then, there is a power series 
solution  $\sigma$ starting at order $d$
 with non-zero radius of convergence.
Moreover, the set of solutions of \eqref{ac} which converge
contains an Euclidean open set in the space of all solutions.
%
\end{lemma}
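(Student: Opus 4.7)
The plan is to track the growth of the homogeneous pieces $A_k, B_k$ in a suitable norm and show that, under the hypothesis $C_{\ge 2}=D_{\ge 2}=0$, the recursion \eqref{mss} produces coefficients of at most exponential growth, which then yields convergence on some polydisk about the origin.

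First I would fix the norm $\|P_k\|:=\max_{i+j=k}|p_{ij}|$ on a homogeneous polynomial $P_k=\sum_{i+j=k} p_{ij}\,x^iy^j$ of degree $k$, and construct an explicit right inverse $L_k$ to the operator $T_k=\partial_y\oplus(-\partial_x)\colon\Sym^k\oplus\Sym^k\to\Sym^{k-1}$ by $L_k(F):=\bigl(\int_0^y F(x,y')\,dy',\, 0\bigr)$. Since the denominators appearing in integration are all at least $1$, one has $\|L_k F\|\le\|F\|$. Multiplication by the fixed constants $C_0, D_0$ and by the fixed linear forms $C_1, D_1$ then increases the norm of a degree-$k$ polynomial by at most a factor $K$ depending only on $C$ and $D$.

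Second, I would parametrise the affine solution space at order $k$ as $(A_k, B_k) = L_k(F_{k-1}) + (\alpha_k, \beta_k)$, where $F_{k-1}$ denotes the right-hand side of \eqref{mss} and $(\alpha_k, \beta_k) \in K_k := \ker T_k$ is the free kernel contribution. Writing $M_k := \max(\|A_k\|, \|B_k\|)$, the hypothesis $C_{\ge 2} = D_{\ge 2} = 0$ collapses \eqref{mss} into a three-term relation giving
\[
M_k \le K(M_{k-1} + M_{k-2}) + \max(\|\alpha_k\|, \|\beta_k\|).
\]
Choosing $\gamma$ large enough that $K/\gamma + K/\gamma^2 < 1/2$, a straightforward induction then shows that whenever $M_d, M_{d+1}$ and each subsequent $\|(\alpha_k, \beta_k)\|$ are bounded by $c\gamma^k/2$ for some fixed $c > 0$, one has $M_k \le c\gamma^k$ for all $k \ge d$. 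Summing the resulting geometric majorant proves that the $\sigma$ so constructed converges on some polydisk around the origin.

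Finally, for the Euclidean open set claim I would observe that these majorant bounds are stable under small perturbations at each stage: for fixed $c$ and $\gamma$ as above, the set of kernel sequences $((\alpha_k, \beta_k))_{k \ge d} \in \prod_k K_k$ satisfying $\|(\alpha_k, \beta_k)\| < c\gamma^k/2$ for every $k$ is a product of Euclidean open neighbourhoods of zero in each finite-dimensional factor $K_k$, and every such sequence parametrises a convergent solution. I do not expect any real obstacle beyond bookkeeping of the multiplicative constants in the definitions of $K$ and $\gamma$; all estimates reduce to elementary majorant arguments precisely because $C$ and $D$ are affine linear, which makes the right-hand side of \eqref{mss} depend on only two earlier orders.
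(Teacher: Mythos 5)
Your proposal is correct and follows essentially the same route as the paper: the same coefficient-max norm, the same collapse of \eqref{mss} to a two-step recursion when $C_{\ge2}=D_{\ge2}=0$, a particular solution obtained by integration (which does not increase the norm) plus a free choice at each stage, leading to a geometric majorant $M_k\le c\gamma^k$ and hence convergence on a polydisk, with the openness of the stagewise bounds giving the Euclidean open set. The only cosmetic difference is that you bound the free (kernel) part by a fixed exponential $c\gamma^k/2$, whereas the paper bounds its free choice $F_{k-1}$ relative to the previous stage; both yield the same conclusion.
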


\begin{proof}
When $C_{\ge2}=0=D_{\ge2}$, equation \eqref{mss} simplifies to
\begin{equation} \label{msss}
A_{k,y}-B_{k,x}\,=\,C_0A_{k-1}+D_0B_{k-1}+C_1A_{k-2}+D_1B_{k-2}.
\end{equation}
Let $\|p\|$ denote the maximum of the absolute values of the coefficients of the polynomial $p(x,y)$, and make the following definitions:
\begin{eqnarray*}
\CCC &=& \max(|C_0|,|D_0|,2\|C_1\|,2\|D_1\|), \\
\AAA_k &=& \max(\|A_k\|,\|B_k\|,\|A_{k-1}\|,\|B_{k-1}\|),\\
E_{k-1} &=& C_0A_{k-1}+D_0B_{k-1}+C_1A_{k-2}+D_1B_{k-2}. 
\end{eqnarray*}
We easily find
\begin{align*}
\|E_{k-1}\|\le &
\ |C_0|\|A_{k-1}\|+|D_0|\|B_{k-1}\|+2\|C_1\|\|A_{k-2}\|+2\|D_1\|\|B_{k-2}\|
\\ \le &\ 4\CCC\AAA_{k-1}.
\end{align*}
A general solution of \eqref{msss} is provided by integrating
\begin{eqnarray*}
A_{k,y} &=& F_{k-1}, \\
B_{k,x} &=& F_{k-1}-E_{k-1},
\end{eqnarray*}
for a general choice of homogeneous degree $k-1$ polynomial $F_{k-1}$.
We choose now $F_{k-1}$ to satisfy
$$\|F_{k-1}\|<4\CCC\AAA_{k-1},$$
which is a Euclidean open condition.

Since integration divides all coefficients by integers we see 
\begin{equation} \label{estimate}
\|A_k\|<4\CCC\AAA_{k-1},\quad \|B_k\|<8\CCC\AAA_{k-1},
\end{equation}
so
$$
\AAA_k\le\max(1,8\CCC)\AAA_{k-1}.
$$
Therefore
$$
\AAA_{k}\le\alpha\beta^k
$$
for some constants $\alpha,\beta$. The 1-form $\sigma$ solving \eqref{ac}
 thus has radius of convergence at least $\beta^{-1}$.
\end{proof}

\section{Algebraic almost closed 1-forms} \label{Salg}

We return to the formal power series solutions $\sigma$ of \eqref{ac}
starting at degree $d$. 
For general choices of $A_d$ and $B_d$, the zero locus $\mathcal{Z}(\sigma)$ 
will be a 
nonreduced scheme $\mathcal{Z}_0(\sigma)$
at the origin of $\C^2$ (plus possibly some other 
disjoint loci). For sufficiently large $N$, $$\mathcal{Z}_0(\sigma) \subset
\text{Spec}(\C[\![x,y]\!]/\mathfrak m^N),$$ 
where $\mathfrak m=(x,y)$ is the maximal ideal of the origin. Equivalently,
\begin{equation} \label{N}
\mathfrak m^N\subset(A,B)\subset\C[\![x,y]\!].
\end{equation}
Consider the polynomial 1-form
\begin{eqnarray*}
\sigma_{\le N} &=& A_{\le N}\,dx+B_{\le N}\,dy \\
&=& (A_d+A_{d+1}+\ldots+A_N)dx+(B_d+B_{d+1}+\ldots+B_N)dy.
\end{eqnarray*}
While $\sigma_{\le N}$
 need not be almost closed on all of $\C^2$ (issues may arise
at the zeros of $\sigma$ away from the origin),
 $\sigma_{\le N}$ \emph{is} almost closed
in a Zariski open neighbourhood of the origin:

\begin{lemma}
In the localised ring $\C[x,y]_{(x,y)}$, the 1-form $\sigma_{\le N}$ is almost closed,
$$(d\sigma_{\le N})\subset (A_{\le N}, B_{\le N})\subset\C[x,y]_{(x,y)}.$$
\end{lemma}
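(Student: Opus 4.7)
The plan is to verify the inclusion first inside the completion $\C[[x,y]]$, where the formal power series identity \eqref{ac} is available, and then descend to the localisation $\C[x,y]_{(x,y)}$ by faithful flatness of completion.

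First I would split $A=A_{\le N}+A_{>N}$ and $B=B_{\le N}+B_{>N}$, where $A_{>N},B_{>N}\in\mathfrak m^{N+1}$, and substitute into $A_y-B_x=CA+DB$ to obtain
$$(A_{\le N})_y-(B_{\le N})_x \;=\; CA_{\le N}+DB_{\le N}+R,$$
where $R:=CA_{>N}+DB_{>N}-(A_{>N})_y+(B_{>N})_x$. The first two terms of $R$ lie in $\mathfrak m^{N+1}$ and the last two in $\mathfrak m^N$, so $R\in\mathfrak m^N\subset\C[[x,y]]$. The terms $CA_{\le N}$ and $DB_{\le N}$ manifestly lie in the extended ideal $(A_{\le N},B_{\le N})\C[[x,y]]$, so the task reduces to absorbing $R$ into this ideal.

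The key step is the ideal equality $(A,B)=(A_{\le N},B_{\le N})$ in $\C[[x,y]]$, which together with \eqref{N} will give $\mathfrak m^N\subset(A_{\le N},B_{\le N})\C[[x,y]]$ and hence handle $R$. By \eqref{N} we have $A_{>N},B_{>N}\in\mathfrak m^{N+1}\subset\mathfrak m\cdot(A,B)$, so there is a $2\times2$ matrix $M$ with entries in $\mathfrak m$ satisfying
$$\begin{pmatrix}A_{>N}\\ B_{>N}\end{pmatrix}=M\begin{pmatrix}A\\ B\end{pmatrix}=M\begin{pmatrix}A_{\le N}\\ B_{\le N}\end{pmatrix}+M\begin{pmatrix}A_{>N}\\ B_{>N}\end{pmatrix}.$$
Since $\det(I-M)\equiv1\pmod{\mathfrak m}$ is a unit in $\C[[x,y]]$, the matrix $I-M$ is invertible over $\C[[x,y]]$, and we can solve to express $A_{>N}$ and $B_{>N}$ as $\C[[x,y]]$-linear combinations of $A_{\le N}$ and $B_{\le N}$, giving the ideal equality.

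Combining the previous two paragraphs, $(A_{\le N})_y-(B_{\le N})_x\in(A_{\le N},B_{\le N})\C[[x,y]]$. The element on the left and the generators $A_{\le N},B_{\le N}$ all belong to the Noetherian local ring $\C[x,y]_{(x,y)}$, whose $\mathfrak m$-adic completion is $\C[[x,y]]$. Because the completion map $\C[x,y]_{(x,y)}\hookrightarrow\C[[x,y]]$ is faithfully flat, the contraction of an extended ideal is the original ideal, and the containment descends to yield the claim. The only real obstacle is the Nakayama-type matrix inversion in $\C[[x,y]]$ that produces the ideal equality $(A,B)=(A_{\le N},B_{\le N})$; the faithfully flat descent step at the end is standard.
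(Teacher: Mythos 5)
Your proof is correct, and its overall skeleton matches the paper's: establish the containment in the completion $\C[\![x,y]\!]$, then descend to $\C[x,y]_{(x,y)}$ by faithful flatness of the (local, flat) completion map. The difference is in how the formal-power-series step is carried out. The paper absorbs the error term degree by degree: it writes $\epsilon(N)=C^0A_{\le N}+D^0B_{\le N}+\epsilon(N+1)$, iterates with $C^k,D^k\in\mathfrak m^k$, and sums the $\mathfrak m$-adically convergent series $\overline C=C+C^0+C^1+\ldots$, $\overline D=D+D^0+D^1+\ldots$ to get $A_{\le N,y}-B_{\le N,x}=\overline C A_{\le N}+\overline D B_{\le N}$. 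You instead prove the ideal equality $(A,B)=(A_{\le N},B_{\le N})$ in $\C[\![x,y]\!]$ in one shot via the determinant trick: since $A_{>N},B_{>N}\in\mathfrak m^{N+1}\subset\mathfrak m\cdot(A,B)$, you get $(I-M)\binom{A_{>N}}{B_{>N}}=M\binom{A_{\le N}}{B_{\le N}}$ with $M$ having entries in $\mathfrak m$, and $I-M$ is invertible because its determinant is a unit. This is essentially a packaged form of the paper's iteration (which amounts to expanding $(I-M)^{-1}$ as a geometric series), but it is cleaner: it avoids the induction and the convergence bookkeeping, at the mild cost of not exhibiting the explicit modified coefficients $\overline C,\overline D$ close to the original $C,D$. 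Your final descent step, phrased as $I\C[\![x,y]\!]\cap\C[x,y]_{(x,y)}=I$ for the faithfully flat extension, is an equivalent formulation of the paper's argument, which instead base-changes the two-term exact sequence \eqref{loc} to \eqref{fp} and uses faithfully flat descent of exactness. Both routes are complete and correct.
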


\begin{proof}
First, we claim $\sigma_{\le N}$ is almost closed when considered in the ring $\C[\![x,y]\!]$ of formal power series at the origin of $\C^2$. 
In fact, $d\sigma_{\le N}$ is
\begin{eqnarray}
A_{\le N,y}-B_{\le N,x} &=& (A_y-B_x)_{\le N-1} \nonumber \\
&=& (CA+DB)_{\le N-1} \nonumber \\
&=& CA_{\le N}+DB_{\le N}+\epsilon(N), \label{fg}
\end{eqnarray}
where $\epsilon(N)$ consists only of terms of degree $\ge N$ and 
therefore 
$$\epsilon(N) \in \mathfrak m^N\subset(A,B)$$ by \eqref{N}.
 So we see  $\sigma_{\le N}$ is almost closed modulo $\mathfrak m^N$, and 
we can write
\begin{eqnarray}
\epsilon(N) &=& C^0A+D^0B \nonumber \\
&=& C^0A_{\le N}+D^0B_{\le N}+(C^0A_{>N}+D^0B_{>N})  \nonumber \\
&=& C^0A_{\le N}+D^0B_{\le N}+\epsilon(N+1) \label{eps}
\end{eqnarray}
for series $C^0,D^0 \in \C[\![x,y]\!]$.
Here
$$
\epsilon(N+1)\in\mathfrak m^{N+1}=\mathfrak m.\mathfrak m^N\subset\mathfrak m.(A,B),
$$
which can therefore be written 
\begin{equation} \label{eps'}
\epsilon(N+1)\,=\,C^1A_{\le N}+D^1B_{\le N}+\epsilon(N+2)
\end{equation}
just as in \eqref{eps}, with $C^1,D^1\in\mathfrak m$. 
Continuing inductively, we write
$$
\mathfrak m^k.(A,B)\supset \mathfrak m^{N+k}\,\ni\ \epsilon(N+k)\,=\,
C^kA_{\le N}+D^kB_{\le N}+\epsilon(N+k+1),
$$
where $C^k,D^k\in\mathfrak m^k$ consist only of terms of degree $\ge k$.
We finally obtain 
$$
A_{\le N,y}-B_{\le N,x}\,=\,\overline{C}A_{\le N}+\overline{D}B_{\le N},
$$
where the series
 $$\overline{C}=C+C^0+C^1+\ldots,\ \ \ \overline{D}=D+D^0+D^1+\ldots$$
are convergent in $\C[\![x,y]\!]$. 
So $\sigma_{\le N}$ is indeed almost closed in $\C[\![x,y]\!]$. \medskip

The above results can be written as the vanishing
$$
[A_{\le N,y}-B_{\le N,x}]=0 \quad\text{in the }\C[\![x,y]\!]\text{-module}\ \frac{\C[\![x,y]\!]}{(A_{\le N},B_{\le N})}\,,
$$
or equivalently that
\begin{equation} \label{fp}
\xymatrix@C=18pt{
0 \ar[r]& \C[\![x,y]\!] \ar[r]^1& \C[\![x,y]\!] \ar[rrr]^{A_{\le N,y}-B_{\le N,x}}&&& \frac{\C[\![x,y]\!]}{(A_{\le N},B_{\le N})}}
\end{equation}
is exact. We are left with showing similarly that
\begin{equation} \label{loc}
\xymatrix@C=18pt{
0 \ar[r]& \C[x,y]_{(x,y)} \ar[r]^1& \C[x,y]_{(x,y)} \ar[rrr]^{A_{\le N,y}-B_{\le N,x}}&&& \frac{\C[x,y]_{(x,y)}}{(A_{\le N},B_{\le N})}}
\end{equation}
is also exact.

Now \eqref{loc} pulls back to \eqref{fp} via the inclusion \begin{equation} \label{in}
\C[x,y]_{(x,y)}\into\C[\![x,y]\!].
\end{equation}
Since \eqref{in} is flat \cite[Theorem 8.8]{Mat} and a local map of local rings, it is faithfully flat by \cite[Theorem 7.2]{Mat}. Thus the exactness of \eqref{loc} follows from that of \eqref{fp}.
\end{proof}

\section{Non-existence of a potential} \label{S4}
Consider again formal power series solutions $\sigma$ of \eqref{ac}
starting at degree $d$. 
We will show for almost every 
choice\footnote{In fact, we can set $C=x,\,D=0$ and $B_{\ge(d+2)}=0$ and 
the proof still works. But the
restriction gives no significant simplification in notation.} of $C$ and
$D$,  the zero scheme
$\mathcal{Z}(\sigma)$ is not the critical locus of at the origin of 
any formal function if $A$ and $B$ 
are chosen to be \emph{general} solutions of \eqref{mss}.

\begin{thm} \label{Zurich} If $C_{1,x}+D_{1,y}\ne0$,
then for $d\ge18$ and general choices of $A,B$ satisfying \eqref{mss}, 
there is no
potential function $\Phi\in\C[\![x,y]\!]$ satisfying
 $$(\Phi_x,\Phi_y) =(A,B)\subset\C[\![x,y]\!].$$
\end{thm}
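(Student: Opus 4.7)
The plan is a proof by contradiction via a power series analysis.  Suppose $\Phi\in\C[\![x,y]\!]$ with $(\Phi_x,\Phi_y)=(A,B)$ exists: then $\Phi$ has order exactly $d+1$ and both $(A,B)$ and $(\Phi_x,\Phi_y)$ are minimal generating sets of the same ideal, so there is a matrix $M=\begin{pmatrix}u & v\\ s & t\end{pmatrix}\in GL_2(\C[\![x,y]\!])$ with $(\Phi_x,\Phi_y)^T=M\,(A,B)^T$.  Existence of $\Phi$ is equivalent to the single integrability
\[
\partial_y(uA+vB)=\partial_x(sA+tB),
\]
which I expand order by order using the almost-closedness recursion \eqref{mss}.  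Applying Euler's identity to $\Phi_{d+1}$, combined with the generic linear independence of $P_{d+1,xx},P_{d+1,xy},P_{d+1,yy}$ (valid for $d\ge 3$), forces $M(0)=cI$.  Rescaling so $M(0)=I$ and $\Phi_{d+1}=P_{d+1}$, the degree-$d$ part of the integrability equation produces six linear equations on the eight degree-$1$ coefficients of $u,v,s,t$, leaving a two-dimensional residual family.

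At degree $d+1$, for generic $A_{d+1},B_{d+1}$ the subspaces of $\C[x,y]_{d+1}$ occupied by the contributions of $A_{d+1},B_{d+1},A_{d+1,x},B_{d+1,x},B_{d+1,y}$ are in general position relative to the degree-$2$ matrix contributions coming from $u_2,v_2,s_2,t_2$, so the coefficients of these five polynomials must vanish.  This consumes the residual and uniquely forces $u_1=t_1=D_0x-C_0y$ and $v_1=s_1=0$; i.e., the degree-$1$ part of $M$ is also scalar.  With this installed, the degree-$(d+2)$ equation has the coefficients of $A_{d+2},B_{d+2},A_{d+2,x},B_{d+2,x},B_{d+2,y}$ vanishing automatically, and a similar generic-independence argument at this next degree forces the coefficients of $A_{d+1},B_{d+1},A_{d+1,x},B_{d+1,x},B_{d+1,y}$ to vanish as well.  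Three of the resulting conditions give $u_2=t_2$, $v_2=s_2=0$, and the other two become
\[
u_{2,y}=-C_1-C_0D_0\,x+C_0^2\,y, \qquad u_{2,x}=D_1+D_0^2\,x-C_0D_0\,y.
\]
The mixed-partial compatibility $(u_{2,y})_x=(u_{2,x})_y$ then collapses, after the $C_0D_0$ terms cancel, to $-C_{1,x}=D_{1,y}$, contradicting $C_{1,x}+D_{1,y}\ne 0$.

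The main obstacle is the generic-independence step: verifying that for general $A,B$, the linear forcings on the five ``next-order'' polynomials at degrees $d+1$ and $d+2$ really cannot be absorbed into the available higher-order matrix contributions.  This reduces to a dimension count in $\C[x,y]_{d+1}$ and $\C[x,y]_{d+2}$ supplemented by explicit linear-independence arguments on the particular polynomial subspaces involved, and it is where the explicit bound $d\ge 18$ enters.
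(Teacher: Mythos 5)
Your proposal is correct and follows essentially the same route as the paper: writing $(\Phi_x,\Phi_y)$ as an invertible matrix congruent to the identity times $(A,B)$ is just the paper's substitution $\Phi_x-A=XA+YB$, $\Phi_y-B=ZA+WB$, and your order-by-order analysis of $\Phi_{xy}=\Phi_{yx}$ in degrees $d-1$ through $d+2$, using generic independence of $A_d,B_d,A_{d+1},B_{d+1}$ and their partials modulo the integrability and Euler relations, reproduces the paper's computations and ends with the same obstruction $C_{1,x}+D_{1,y}=0$. The only difference is cosmetic bookkeeping at the last step: the paper gets the contradiction by comparing the two overdetermined linear systems on the quadratic coefficients $X_2,Y_2,Z_2,W_2$ coming from degrees $d+1$ and $d+2$ (where, strictly, $u_2=t_2$ is forced only together with the final relation), rather than via the mixed-partial compatibility for $u_2$, but the underlying linear system is identical.
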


Assume a potential function $\Phi\in\C[\![x,y]\!]$ exists satisfying
$$
(\Phi_x,\Phi_y)=(A,B)\,\subset\C[\![x,y]\!].
$$
Then, $\Phi_x-A$ and $\Phi_y-B$ are both in the ideal $(A,B)$, so we have
\begin{eqnarray}
\Phi_x-A &=& XA+YB, \label{ZZ} \\
\Phi_y-B &=& ZA+WB, \nonumber
\end{eqnarray}
for series $X,Y,Z,W\in \C[\![x,y]\!]$. 
As usual, we write 
$$X=X_0+X_1+\ldots $$
 with $X_k$ homogeneous of degree $k$ (and similarly for
$Y$, $Z$, and $W$). 
To find $\Phi$ satisfying\footnote{Equation \eqref{ZZ} implies only
$(\Phi_x,\Phi_y)\subset(A,B)$ and has the trivial solution 
$$X=1=W \ \ \ \text{and}\ \ \  Y=0=Z$$ corresponding to constant $f$. We will rule out the trivial solution
by requiring  $(\Phi_x,\Phi_y)=(A,B)$ shortly.} \eqref{ZZ}, 
integrability $$\Phi_{xy}=\Phi_{yx},$$ is
a necessary and sufficient condition. In other words, we must have
\begin{equation} \label{eqn}
\quad -A_y+B_x\,=\,(XA+YB)_y-(ZA+WB)_x.
\end{equation}

We will analyse equation \eqref{eqn} for $X,Y,Z,W$ order by order
 modulo higher and higher powers $\mathfrak m^k$ of the maximal ideal. 
At each order, the issue is linear.
 For the first few orders, equation \eqref{eqn} can be easily solved (with
several degrees of freedom).
But each further
stage imposes more stringent conditions on the choices at the previous stage. In degree $d+2$, we will see the conditions become overdetermined, with no nontrivial solutions $X,Y,Z,W$ for general $A,B$ satisfying \eqref{ac}. 
\medskip
\vspace{10pt}

\noindent\textbf{Degree} $\mathbf{d-1}$.
In homogeneous degree $d-1$, \eqref{eqn} yields
$$
-A_{d,y}+B_{d,x}=X_0A_{d,y}+Y_0B_{d,y}-Z_0A_{d,x}-W_0B_{d,x}.
$$
From \eqref{Pintegrable}, we have the relation $A_{d,y}=B_{d,x}$.
 Otherwise the partial derivatives of $A_d$ and $B_d$ are completely general. 
Therefore the resulting equation
$$
0=(X_0-W_0)A_{d,y}+Y_0B_{d,y}-Z_0A_{d,x}
$$
implies the vanishing of $(X_0-W_0)$, $Y_0$, and $Z_0$. 
Thus in homogeneous degree $d$, the equations \eqref{ZZ} become
\begin{eqnarray*}
\Phi_{d+1,x} &=& (1+X_0)A_d \\
\Phi_{d+1,y} &=& (1+X_0)B_d,\!
\end{eqnarray*}
with $\Phi$ having no terms of degree $\le d$. Since we require
 $(\Phi_x,\Phi_y)$ to generate the ideal $(A,B)$
and since $$A_d, B_d\neq 0$$ by generality, we find $1+X_0\neq 0$ . Rescaling $\Phi$, we can assume 
$$1=1+X_0$$
without loss of generality. So we have found the following conditions at order $d-1$:
\begin{equation} \label{d-1g}
X_0=Y_0=Z_0=W_0=0.
\end{equation}
\vspace{10pt}

\noindent\textbf{Degree} $\mathbf{d}$.
In homogeneous degree $d$, \eqref{eqn} yields
\begin{eqnarray*}
-A_{d+1,y}+B_{d+1,x} \!\!\!&=&\!\!\! (X_1A_d)_y+(Y_1B_d)_y-(Z_1A_d)_x-(W_1B_d)_x \\
&& \hspace{-5mm}+(X_0A_{d+1})_y+(Y_0B_{d+1})_y-(Z_0A_{d+1})_x-(W_0B_{d+1})_x.
\end{eqnarray*}
By our work in the previous degree \eqref{d-1g}, the second line 
on the right vanishes identically. After substituting \eqref{Aintegrable} on the left side, we find
\begin{equation} \label{ddd}
-C_0A_d-D_0B_d\,=\,(X_1A_d)_y+(Y_1B_d)_y-(Z_1A_d)_x-(W_1B_d)_x.
\end{equation}
We rewrite \eqref{ddd} as
 the vanishing of
\begin{multline*}
(X_{1,y}-Z_{1,x}+C_0)A_d+X_1A_{d,y}-Z_1A_{d,x}\\
+(Y_{1,y}-W_{1,x}+D_0)B_d+Y_1B_{d,y}
-W_1B_{d,x}.
\end{multline*}
After expanding the linear unknowns out fully via
\begin{eqnarray*}
 X_1=X_{1,x}x+X_{1,y}y, & & \ Y_1=Y_{1,x}x+Y_{1,y}y,\\
 Z_1=Z_{1,x}x+Z_{1,y}y, & & W_1=W_{1,x}x+W_{1,y}y,
\end{eqnarray*}
we obtain a relation among the degree $d$ homogeneous polynomials
\begin{eqnarray}
A_d,\,xA_{d,x},\,xA_{d,y},\,yA_{d,x},\,yA_{d,y}, \label{kww3}\\
B_d,\,xB_{d,x},\,xB_{d,y},\,yB_{d,x},\,yB_{d,y}.\nonumber
\end{eqnarray}
Since $A_d$ and $B_d$ were chosen generically
subject to $A_{d,y} = B_{d,x}$ , the polynomials
\eqref{kww3} are linearly independent except for the relations
\begin{eqnarray}
A_{d,y} &=& B_{d,x} \quad\text{(multiplied by }x,y),\hspace{-3cm} \nonumber \\
xA_{d,x}+yB_{d,y} &=& dA_d, \label{eulerrel} \\
xA_{d,x}+yB_{d,y} &=& dB_d. \nonumber
\end{eqnarray}
The first is \eqref{Pintegrable}. The last two
are the Euler homogeneity relations. A simple
check shows $d\ge5$ is sufficient to achieve the independence. 

Using the first equations
of \eqref{eulerrel} 
to eliminate $xB_{d,x}$ and $yB_{d,x}$ and the last two to eliminate $A_d,B_d$,
 we find the following equations:
\vspace{8pt}
\begin{eqnarray*}
\text{Coefficient of\ }\ xA_{d,x}: && \quad 0\,=\,(X_{1,y}-Z_{1,x}+C_0)/d-Z_{1,x}, \\ yA_{d,x}: && \quad 0\,=\,-Z_{1,y}, \\
xA_{d,y}: && \quad 0\,=\,X_{1,x}+(Y_{1,y}-W_{1,x}+D_0)/d-W_{1,x}, \\
yA_{d,y}: && \quad 0\,=\,(X_{1,y}-Z_{1,x}+C_0)/d+X_{1,y}-W_{1,y}, \\
xB_{d,y}: && \quad 0\,=\,Y_{1,x}, \\
yB_{d,y}: && \quad 0\,=\,(Y_{1,y}-W_{1,x}+D_0)/d+Y_{1,y}.
\end{eqnarray*}
Therefore we find
\begin{eqnarray}
Y_{1,x}&=&0\ =\ Z_{1,y}, \nonumber \\
W_{1,y}&=&X_{1,y}+Z_{1,x}, \nonumber \\
X_{1,x}&=&Y_{1,y}+W_{1,x}, \label{result} \\
C_0&=&(d+1)Z_{1,x}-X_{1,y}, \nonumber \\
D_0&=&W_{1,x}-(d+1)Y_{1,y}. \nonumber 
\end{eqnarray}

%
The equations \eqref{result}
 can be solved with room to spare --- there is a 2 dimensional affine space of solutions. However, the constraints in next degree will 
impose further conditions which specify $X_1,Y_1,Z_1,W_1$ uniquely. \medskip
\vspace{10pt}

\noindent\textbf{Degree} $\mathbf{d+1}$.
In homogeneous degree $d+1$, equation \eqref{eqn} yields
\begin{eqnarray*}
-A_{d+2,y}+B_{d+2,x} \!\!&=&\!\!
(X_2A_d)_y+(Y_2B_d)_y-(Z_2A_d)_x-(W_2B_d)_x \\
&& \hspace{-5mm}+(X_1A_{d+1})_y+(Y_1B_{d+1})_y-(Z_1A_{d+1})_x-(W_1B_{d+1})_x \\
&& \hspace{-5mm}+(X_0A_{d+2})_y+(Y_0B_{d+2})_y-(Z_0A_{d+2})_x-(W_0B_{d+2})_x.
\end{eqnarray*}
By the constraints \eqref{d-1g}, the third line vanishes identically. 
Substituting equation \eqref{mss} for $k=d+2$  on the left side yields
\begin{align} 
-C_0A_{d+1}-D_0&B_{d+1}-C_1A_d-D_1B_d= \nonumber \\
&\qquad (X_2A_d)_y+(Y_2B_d)_y-(Z_2A_d)_x-(W_2B_d)_x \label{d-1} \\
&+(X_1A_{d+1})_y+(Y_1B_{d+1})_y-(Z_1A_{d+1})_x-(W_1B_{d+1})_x. \nonumber
\end{align}

We work first modulo those degree $d+1$ polynomials generated by 
$A_d,B_d$ and their first partial derivatives. More
preciely, let $$V\subset\,\Sym^{d+1}(\C^2)^*$$ 
denote the subspace spanned by $x$ and $y$ multiplied by $A_d,B_d$, and 
$x^2,xy,y^2$ multiplied by $A_{d,x},A_{d,y},
B_{d,x},B_{d,y}$. The subspace $V$
 has dimension 9 due to the relations \eqref{eulerrel}.
In the quotient space $\Sym^{d+1}(\C^2)^*/V$, equation \eqref{d-1} is
\begin{multline} \label{pqq22}
-C_0A_{d+1}-D_0B_{d+1}\,= \\
(X_1A_{d+1})_y+(Y_1B_{d+1})_y-(Z_1A_{d+1})_x-(W_1B_{d+1})_x,
\end{multline}
where all terms are taken mod $V$. 

Equation \eqref{pqq22} has form identical to equation \eqref{ddd}
analysed in the
 previous degree with $A_d,B_d$ replaced by $A_{d+1},B_{d+1}$. 
The analysis of the previous sections applies again here: 
the first relation of \eqref{eulerrel} holds mod $V$ by \eqref{Aintegrable}, and the two Euler relations hold with $d$ replaced by $d+1$. By generality,
for $d\ge13$, the polynomials $A_{d+1},B_{d+1}$ and their partial derivatives (multiplied by $x,y$) are independent of $A_d,B_d$ (multiplied by $x,y$) and their partial derivatives (multiplied by $x^2,xy,y^2$) except for relation \eqref{Aintegrable} and the Euler relations. We conclude the equations
corresponding to \eqref{result} hold (with $d$ replaced by $d+1$):
\begin{eqnarray*}
Y_{1,x}&=&0\ =\ Z_{1,y}, \\
W_{1,y}&=&X_{1,y}+Z_{1,x}, \\
X_{1,x}&=&Y_{1,y}+W_{1,x}, \\
C_0&=&(d+2)Z_{1,x}-X_{1,y}, \\
D_0&=&W_{1,x}-(d+2)Y_{1,y}.
\end{eqnarray*}
After combining with the original equations \eqref{result},
 we find
\begin{eqnarray}
Y_{1,x}&=&0\ =\ Y_{1,y}, \nonumber \\
Z_{1,x}&=&0\ =\ Z_{1,y}, \nonumber \\
X_{1,y}&=&W_{1,y}\ =\ -C_0, \label{c0d0} \\
X_{1,x}&=&W_{1,x}\ =\ D_0. \nonumber
\end{eqnarray}
We have uniquely solved for $X_1,Y_1,Z_1,W_1$. 

We next consider $X_2,Y_2,Z_2,W_2$. There will be no obstruction to solving for $X_2,Y_2,Z_2,W_2$ here.
However, in the next degree, we will find further constraints: the
resulting  
overdetermined system for
$X_2,Y_2,Z_2,W_2$ will
have solutions only if $C_{1,x}+D_{1,y}=0$. \smallskip

After substituting the constraints \eqref{c0d0} into \eqref{d-1},
 we obtained a simpler equation:
\begin{multline*}
-C_1A_d-D_1B_d= (X_2A_d)_y+(Y_2B_d)_y-(Z_2A_d)_x-(W_2B_d)_x\\
+X_1(A_{d+1,y}-B_{d+1,x}).
\end{multline*}
By \eqref{Aintegrable} and \eqref{c0d0}, the last term 
$X_1(A_{d+1,y}-B_{d+1,x})$ is
$$
X_1(C_0A_d+D_0B_d)=(D_0x-C_0y)(C_0A_d+D_0B_d).
$$
We define new terms
 $$\widetilde C_1=C_1+C_0(D_0x-C_0y), \ \ \ \widetilde D_1=D_1+D_0(D_0x-C_0y).$$
 Then, we can write equation \eqref{d-1} as
\begin{equation} \label{tilde}
-\widetilde C_1A_d-\widetilde D_1B_d=(X_2A_d)_y+(Y_2B_d)_y-(Z_2A_d)_x-(W_2B_d)_x.
\end{equation}
Notice the similarity to \eqref{ddd}. 

We solve \eqref{tilde} following our approach to \eqref{ddd}.
After expanding the unknowns,
\begin{eqnarray*}
X_2 &= &X_{2,xx}\frac{x^2}2+X_{2,xy}xy+X_{2,yy}\frac{y^2}2, \\
Y_2 &= &Y_{2,xx}\frac{x^2}2+Y_{2,xy}xy+Y_{2,yy}\frac{y^2}2, \\
Z_2 &= &Z_{2,xx}\frac{x^2}2+Z_{2,xy}xy+Z_{2,yy}\frac{y^2}2, \\
W_2 &= &W_{2,xx}\frac{x^2}2+W_{2,xy}xy+W_{2,yy}\frac{y^2}2, 
\end{eqnarray*}
we obtain 
\begin{align}
-(\tilde C_{1,x}x&+\tilde C_{1,y}y)A_d-(\tilde D_{1,x}x+\tilde D_{1,y}y)B_d= \nonumber \\ & \quad \Big(\!X_{2,xx}\frac{x^2}2+X_{2,xy}xy+
X_{2,yy}\frac{y^2}2\Big)A_{d,y}+(X_{2,xy}x+X_{2,yy}y)A_d \nonumber \\ &
+\Big(Y_{2,xx}\frac{x^2}2+Y_{2,xy}xy+Y_{2,yy}\frac{y^2}2\Big)B_{d,y}+(Y_{2,xy}x+
Y_{2,yy}y)B_d \nonumber \\ & -\Big(Z_{2,xx}\frac{x^2}2+Z_{2,xy}xy+
Z_{2,yy}\frac{y^2}2\Big)A_{d,x}-(Z_{2,xx}x+ Z_{2,xy}y)A_d \nonumber \\ &
-\Big(W_{2,xx}\frac{x^2}2+W_{2,xy}xy+W_{2,yy}\frac{y^2}2\Big)B_{d,x}-
(W_{2,xx}x+W_{2,xy}y)B_d. \label{train}
\end{align}
We consider the above to be a relation among
\begin{align*}
xA_d,yA_d,x^2A_{d,x},xyA_{d,x},y^2A_{d,x},x^2A_{d,y},xyA_{d,y},y^2A_{d,y}, \\ 
xB_d,yB_d,x^2B_{d,x},xyB_{d,x},y^2B_{d,x},x^2B_{d,y},xyB_{d,y},y^2B_{d,y}.
\end{align*}
By generality,
for $d\ge7$, these are linearly independent degree $d+1$ homogeneous polynomials modulo \eqref{Pintegrable} and the Euler relations \eqref{eulerrel}:
\begin{align} \nonumber
A_{d,y}=&\ B_{d,x} \quad  \text{(multiplied by }x^2,xy,y^2), \\ \hspace{1cm}
xA_{d,x}+yA_{d,y}=&\ dA_d\quad \text{(multiplied by }x, y), \label{dr}\\
xB_{d,x}+yB_{d,y}=&\ dB_d\quad \text{(multiplied by }x, y). \nonumber
\end{align}
We use \eqref{dr} to 
eliminate the terms
$$x^2B_{d,x}, xyB_{d,x}, y^2B_{d,x}, xA_d,yA_d,xB_d,yB_d$$
 on the right hand side of  \eqref{train}.
Then indepedence yields
the equations
\begin{eqnarray*}
x^2A_{d,x}: && \quad 0\,=\, \widetilde C_{1,x}/d
-Z_{2,xx}/2+X_{2,xy}/d-Z_{2,xx}/d, \\
xyA_{d,x}: && \quad 0\,=\,\widetilde C_{1,y}/d-Z_{2,xy}+X_{2,yy}/d-Z_{2,xy}/d, \\
y^2A_{d,x}: && \quad 0\,=\,-Z_{2,yy}/2, \\
x^2A_{d,y}: && \quad 0\,=\,\widetilde D_{1,x}/d
+X_{2,xx}/2-W_{2,xx}/2+Y_{2,xy}/d-W_{2,xx}/d, \\
xyA_{d,y}: && \quad 0\,=\,\widetilde C_{1,x}/d+\widetilde D_{1,y}/d
+X_{2,xy}-W_{2,xy}+X_{2,xy}/d
\\ && \hspace{4cm} -Z_{2,xx}/d+Y_{2,yy}/d-W_{2,xy}/d, \\
y^2A_{d,y}: && \quad 0\,=\,\widetilde C_{1,y}/d
+X_{2,yy}/2-W_{2,yy}/2+X_{2,yy}/d-Z_{2,xy}/d, \\
x^2B_{d,y}: && \quad 0\,=\,Y_{2,xx}/2, \\
xyB_{d,y}: && \quad 0\,=\,\widetilde D_{1,x}/d+Y_{2,xy}+Y_{2,xy}/d-W_{2,xx}/d, \\
y^2B_{d,y}: && \quad 0\,=\,\widetilde D_{1,y}/d+Y_{2,yy}/2+Y_{2,yy}/d-W_{2,xy}/d.
\end{eqnarray*}
Use the 2nd equation to eliminate $X_{2,yy}$ from the 6th. Use the 8th equation to eliminate $W_{2,xx}$ from the 4th. Finally,
 use the 1st and last equations to remove $X_{2,xy}$ and $W_{2,xy}$ from the 5th. 
Tidying up, we find $Z_{2,yy}=0=Y_{2,xx}$ and
\begin{eqnarray}
(d+2)Z_{2,xx}-2X_{2,xy} &=& 2\widetilde C_{1,x}, \nonumber \\
(d+1)Z_{2,xy}-X_{2,yy} &=&\widetilde C_{1,y}, \nonumber \\
X_{2,xx}-(d+3)Y_{2,xy} &=& \widetilde D_{1,x}, \nonumber \\
(d+3)(Z_{2,xx}-Y_{2,yy})&=&2(\widetilde C_{1,x}+\tilde D_{1,y}), \label{dv} \\
-W_{2,yy}+(d+3)Z_{2,xy} &=&\widetilde C_{1,y}, \nonumber \\
(d+1)Y_{2,xy}-W_{2,xx} &=&-\widetilde D_{1,x}, \nonumber \\
(d+2)Y_{2,yy}-2W_{2,xy} &=&-2\widetilde D_{1,y}. \nonumber 
\end{eqnarray}
Therefore we can specify $Z_{2,xx},Z_{2,xy},Y_{2xy}$ arbitrarily and uniquely solve for $X_2,Y_2,Z_2,W_2$ after that. However, 
the conditions at the next degree will place futher constraints.\medskip
\vspace{10pt}

\noindent\textbf{Degree} $\mathbf{d+2}$.
In homogeneous degree $d+2$, equation \eqref{eqn} yields
\begin{eqnarray*}
-A_{d+3,y}+B_{d+3,x} \!\!&=&\!\!\!
(X_3A_d)_y+(Y_3B_d)_y-(Z_3A_d)_x-(W_3B_d)_x \\
&& \hspace{-6mm}+(X_2A_{d+1})_y+(Y_2B_{d+1})_y-(Z_2A_{d+1})_x-(W_2B_{d+1})_x \\
&& \hspace{-6mm}+(X_1A_{d+2})_y+(Y_1B_{d+2})_y-(Z_1A_{d+2})_x-(W_1B_{d+2})_x \\
&& \hspace{-6mm}+(X_0A_{d+3})_y+(Y_0B_{d+3})_y-(Z_0A_{d+3})_x-(W_0B_{d+3})_x.
\end{eqnarray*}
By the constraints \eqref{d-1g}, the fourth line vanishes identically. Substituting
\eqref{mss} for $k=d+3$ on the left hand side yields
$$
-C_0A_{d+2}-D_0B_{d+2}-C_1A_{d+1}-D_1B_{d+1}-C_2A_d-D_2B_d.
$$
Work modulo the (12 dimensional space of) degree 
$d+2$ homogeneous polynomials generated by $A_d,B_d$ 
(multiplied by $x^2,xy$ or $y^2$) 
and their first partial derivatives (multiplied by $x^3,x^2y,xy^2,y^3$). We find
\begin{align*}
-C_0A_{d+2}-D_0&B_{d+2}-C_1A_{d+1}-D_1B_{d+1} =  \\
&\hspace{5mm} (X_2A_{d+1})_y+ (Y_2B_{d+1})_y-(Z_2A_{d+1})_x-(W_2B_{d+1})_x \\
&+(X_1A_{d+2})_y+ (Y_1B_{d+2})_y-(Z_1A_{d+2})_x-(W_1B_{d+2})_x.
\end{align*}
Substituting \eqref{c0d0} for $X_1,Y_1,Z_1,W_1$ into the third line and moving it to the first gives
\begin{multline*}
-C_1A_{d+1}-D_1B_{d+1} - (D_0x-C_0y)(A_{d+2,y}-B_{d+2,x}) = \\
(X_2A_{d+1})_y+(Y_2B_{d+1})_y-(Z_2A_{d+1})_x-(W_2B_{d+1})_x.
\end{multline*}
By \eqref{mss} for $k=d+2$, we know the term
 $$A_{d+2,y}-B_{d+2,x} =C_0A_{d+1}+D_0B_{d+1}+C_1A_d+D_1B_d\,.$$
 Since we are working modulo $A_d,B_d$, we obtain
\begin{multline} \label{tilde2}
-\widetilde C_1A_{d+1}-\widetilde D_1B_{d+1}=\\
(X_2A_{d+1})_y+(Y_2B_{d+1})_y-(Z_2A_{d+1})_x-(W_2B_{d+1})_x,
\end{multline}
where 
 $$\widetilde C_1=C_1+C_0(D_0x-C_0y), \ \ \ \widetilde D_1=D_1+D_0(D_0x-C_0y),$$
as before.

Equation \eqref{tilde2} is precisely
the same as equation \eqref{tilde} studied at the previous degree
 with $d$ replaced by $d+1$. Also,
the relations \eqref{dr} still hold (after replacing $d$ by $d+1$) 
by \eqref{Aintegrable}, since we are working mod $A_d,B_d$. And these are 
the only relations which hold for general $A_{d+1},B_{d+1}$ once $d\ge18$.

Hence, the analysis in the previous degree applies here verbatim.
We derive the same equations \eqref{dv} with $d$ replaced by $d+1$.
The first is
$Z_{2,yy}=0=Y_{2,xx}$. And the rest are
\begin{eqnarray*}
(d+3)Z_{2,xx}-2X_{2,xy} &=& 2\widetilde C_{1,x}, \\
(d+2)Z_{2,xy}-X_{2,yy} &=&\widetilde C_{1,y}, \\
X_{2,xx}-(d+4)Y_{2,xy} &=& \widetilde D_{1,x}, \\
(d+4)(Z_{2,xx}-Y_{2,yy})&=&2(\widetilde C_{1,x}+\widetilde D_{1,y}), \\
-W_{2,yy}+(d+4)Z_{2,xy} &=&\widetilde C_{1,y}, \\
(d+2)Y_{2,xy}-W_{2,xx} &=&-\widetilde D_{1,x}, \\
(d+3)Y_{2,yy}-2W_{2,xy} &=&-2\widetilde D_{1,y}.
\end{eqnarray*}
From the central equation and the
 counterpart in \eqref{dv}, we obtain
 a necessary condition for there to exist any solutions:
$$
\widetilde C_{1,x}+\widetilde D_{1,y}=0,
$$
or equivalently,
\begin{equation} \label{cdxy}
C_{1,x}+D_{1,y}=0.
\end{equation}
We have completed the proof of Theorem 3. \qed

\vspace{10pt}

Since $C_1$ and $D_1$ can chosen arbitrarily at the beginning,
choosing them to violate \eqref{cdxy} will imply the non-existence
of a potential $\Phi$. More precisely, when
$A_k,B_k$ are general solutions of the equations \eqref{mss}, 
the resulting almost closed 1-form generates an ideal at the origin which 
is not the critical locus of any formal power series $\Phi\in \mathbb{C}[\![x,y]\!]$. 

By Lemma 2, we can find an algebraic almost closed 1-form
on a Zariski open set of the origin in $\mathbb{C}^2$
which is not the critical locus  of any holomorphic (or even formal) function
$\Phi$ defined near the origin of $\mathbb{C}^2$.
\medskip


\section{Embedding in higher dimensions}

Let $\sigma$ be an algebraic
almost closed 1-form on a Zariski open set of the origin
in $\mathbb{C}^2$ (as constructed above) 
whose zero locus $\mathcal Z(\sigma)$ is both $0$-dimensional and \emph{not}
the critical locus of any holomorphic potential function $\Phi$
near the origin in $\mathbb{C}^2$.

\begin{prop}
The scheme $\mathcal{Z}(\sigma)\subset \C^2$
cannot be written
as the critical locus $\mathcal{Z}(d\Phi)$ of any 
holomorphic function $\Phi$ on 
a nonsingular variety.
\end{prop}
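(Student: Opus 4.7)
My plan is to reduce any hypothetical presentation of $\mathcal{Z}(\sigma)$ as a critical locus on a higher-dimensional smooth variety back to the two-dimensional situation already excluded by Theorem \ref{Zurich}. Suppose for contradiction that $\mathcal{Z}(\sigma) \cong \mathcal{Z}(d\Phi)$ as schemes for some holomorphic function $\Phi$ on a nonsingular variety $V$. Since $\mathcal{Z}(\sigma)$ is zero-dimensional and supported at a single point, so is $\mathcal{Z}(d\Phi)$, and after localising I may assume $V$ is a polydisc neighbourhood of the origin in $\C^n$ at which $\Phi$ has an isolated critical point.

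The main tool is the holomorphic splitting (Morse) lemma. Letting $r$ be the rank of the Hessian of $\Phi$ at $0$, there are local holomorphic coordinates in which
\[
\Phi(x_1,\ldots,x_n) \;=\; \Phi'(x_1,\ldots,x_{n-r}) \,+\, Q(x_{n-r+1},\ldots,x_n),
\]
where $Q$ is a nondegenerate quadratic form and $\Phi'$ has vanishing linear and quadratic parts. Because $(dQ) = (x_{n-r+1},\ldots,x_n)$, we obtain a scheme isomorphism $\mathcal{Z}(d\Phi) \cong \mathcal{Z}(d\Phi') \subset \C^{n-r}$, and since $(d\Phi') \subset \mathfrak{m}^2$ the embedding dimension of $\mathcal{Z}(d\Phi')$ at the origin equals $n-r$. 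On the other hand, because $\sigma$ starts in degree $d \ge 18$, the ideal $(A,B)$ lies in $\mathfrak{m}^2 \subset \C[\![x,y]\!]$, so $\mathcal{Z}(\sigma)$ has embedding dimension exactly $2$ at the origin. Matching this intrinsic invariant across $\mathcal{Z}(\sigma) \cong \mathcal{Z}(d\Phi')$ forces $n-r = 2$.

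Finally, I transfer $\Phi'$ back to the original $\C^2$. Since $\mathcal{Z}(\sigma)$ and $\mathcal{Z}(d\Phi')$ are both subschemes of $\C^2$ of embedding dimension $2$ at the origin, their abstract isomorphism lifts to an ambient local biholomorphism: lift a pair of generators of the maximal ideal of $\mathcal{O}_{\mathcal{Z}(d\Phi'),0}$ back to $\C\{x,y\}$, use the isomorphism to view them as elements of $\C\{x,y\}$ as well, and apply the holomorphic inverse function theorem to the resulting endomorphism of $\C\{x,y\}$, which is an isomorphism on cotangent spaces by construction. Pulling $\Phi'$ back along this biholomorphism yields a holomorphic potential on a neighbourhood of the origin in $\C^2$ with critical scheme $\mathcal{Z}(\sigma)$, contradicting Theorem \ref{Zurich}. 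The main obstacle is not conceptual depth but bookkeeping: verifying that the splitting lemma applies when the critical scheme is only zero-dimensional (it does, since an isolated holomorphic critical point is finitely determined) and spelling out the lifting of an abstract scheme isomorphism to an ambient biholomorphism.
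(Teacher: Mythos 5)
Your argument is correct, and at its core it is the same reduction as the paper's: the kernel of the Hessian of $\Phi$ at the support point is the ($2$-dimensional) Zariski tangent space of the critical scheme, the Hessian is nondegenerate transverse to it, and one uses this to cut the ambient dimension down to $2$ and then contradict Theorem \ref{Zurich}. The difference is in packaging. The paper carries out the reduction by hand: it chooses a product structure $A=B\times C$ with $\mathcal Z\subset B\times\{c\}$, shows that $d_C\Phi$ cuts out a smooth surface $B'$ tangent to $B$, and identifies $\mathcal Z$ with the critical locus of $\Phi|_{B'}$. You instead invoke the holomorphic splitting (generalized Morse) lemma to write $\Phi=\Phi'(x_1,\dots,x_{n-r})+Q$ with $Q$ a nondegenerate quadratic form and $\Phi'\in\mathfrak m^3$, and then pin down $n-r=2$ by matching embedding dimensions, using $(A,B)\subset\mathfrak m^2$. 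These are equivalent: the splitting lemma's proof is essentially the paper's construction of $B'$ plus an (unneeded) normalisation of the quadratic part, and it does not in fact require finite determinacy or an isolated critical point, so your parenthetical worry is moot. What your write-up adds is the explicit final step --- lifting the abstract isomorphism of the two critical schemes, both of embedding dimension $2$, to a local biholomorphism of $(\C^2,0)$ via the inverse function theorem, and noting that the Jacobian ideal is invariant under such a coordinate change --- which the paper leaves implicit when it declares the contradiction with Theorem \ref{Zurich} (whose statement concerns the specific ideal $(A,B)\subset\C[\![x,y]\!]$). Making that transfer explicit is a genuine (if standard) point in favour of your version; just state clearly that two continuous local homomorphisms to the Artinian ring $\C\{x,y\}/(A,B)$ agreeing on coordinates agree everywhere, so the lifted map really carries $(\Phi'_u,\Phi'_v)$ onto $(A,B)$.
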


\begin{proof}
For contradiction, suppose 
$${\mathcal{Z}(\sigma)}=\mathcal{Z}(d\Phi)$$
for some holomorphic function 
$\Phi$ on a nonsingular analytic variety $A$.
Let $\mathcal Z$ denote $\mathcal{Z}(\sigma)=\mathcal{Z}(d\Phi)$.

We show $A$ can be cut down to 2 dimensions.
Since $\mathcal{Z}$ has a 2-dimensional Zariski tangent space, 
$A$ has a product structure (perhaps after shrinking to a Euclidean open neighbourhood) 
\begin{equation} \label{ABC}
A=B\times C
\end{equation}
with $B$ nonsingular of dimension 2 and 
$$\mathcal{Z}\subset B\times\{c\}\subset B\times C=A\,.$$
Let $b\in B$ be the point at which $\mathcal{Z}$ is supported, and
$$p=(b,c) \in A\,.$$

Now $D(d\Phi)|_p$ is injective on $T_{c}C$ because $T_{c}C$ 
is a complement to the kernel $T_bB$. 
Since $D(d\Phi)|_p$ is symmetric,  
$\operatorname{Im}D(d\Phi)|_p$ lies
 in the annihilator of
$\operatorname{ker}D(d\Phi)|_p$, which is $\Omega_C|_{c}$. Thus,
 the composition
$$
\xymatrix@C=30pt{
T_C\ \ar@{^(->}[r] & T_A \rto^(.48){D(d\Phi)} & \Omega_A \ar@{->>}[r] & \Omega_C}
$$
on $\mathcal{Z}$ 
is injective at $p$. The composition is therefore an isomorphism at $p$, and hence, by openness, an isomorphism in a neighbourhood of $\mathcal{Z}$.

Thus, writing $(d_B\Phi,d_C\Phi)$ for $d\Phi=(d_{B\times C/C}\Phi,
d_{B\times C/B}\Phi)$ in the product structure \eqref{ABC}, we find that the zero locus
$$
B'=\mathcal Z(d_C\Phi)
$$
of $d_C\Phi$ is tangent to $B$ at $p$ and smooth and 2-dimensional in a neighbourhood. Shrinking if necessary, we can assume that $B'$ is everywhere nonsingular and never tangent to the $C$ fibres of $A=B\times C$.

Now $\mathcal Z$ is the zero locus of $d_B\Phi$ on $B'$. By the tangency condition, $\mathcal Z$ is the same as the zero locus of $(d\Phi)|_{B'}=d(\Phi|_{B'})$.
Thus $\mathcal Z$  is the critical locus of $\Phi|_{B'}$, with $B'$
nonsingular and 2-dimensional, contradicting  Theorem \ref{Zurich}.
\end{proof}

\vspace{+8 pt}

\noindent Departement Mathematik \hfill Department of Mathematics \\
\noindent ETH Z\"urich \hfill  Princeton University \\
\noindent rahul@math.ethz.ch  \hfill rahulp@math.princeton.edu \\

\vspace{+8 pt}
\noindent
Department of Mathematics \\
Imperial College \\
richard.thomas@imperial.ac.uk

\end{document}